\theoremstyle{plain}
\newtheorem{thm}{Theorem}[section]
\theoremstyle{definition} \theoremstyle{definition}
\newtheorem{defn}[thm]{Definition}
\newtheorem{coro}[thm]{Corollary}
\newtheorem{prop}[thm]{Proposition}
\renewcommand\thefootnote{\fnsymbol{footnote}}
\numberwithin{equation}{section}
\begin{document}

\title{{\huge \textbf{On Geometry of Isophote Curves in Galilean space}}}
\author{ Z\"{u}hal K\"{u}\c{c}\"{u}karslan Y\"{u}zba\c{s}\i\ $^\dag$ and Dae Won Yoon $^\ddag$ }
\date{}
\maketitle
{\footnotesize \vskip 0.2 cm {%
\centerline  { $\dag$ Department of
Mathematics}}} {\footnotesize {\centerline  {F\i rat University }}}

{\footnotesize \centerline{ 23119 Elazig, Turkey} }

{\footnotesize 
\centerline { {E-mail address}:   {\tt
zuhal2387@yahoo.com.tr}} }

{\footnotesize {%
\centerline  {$\ddag$ Department of Mathematics Education and
RINS}}}

{\footnotesize \centerline  {Gyeongsang National University }}

{\footnotesize \centerline{ Jinju 52828, Republic of Korea} }

{\footnotesize \centerline { {E-mail address}:   {\tt dwyoon@gnu.ac.kr}} }

\begin{abstract}
In this paper, we introduce isophote curves on surfaces in Galilean 3-space.
Apart from the general concept of isophotes, we split our studies into two cases
to get the axis $d$ of isophote curves lying on a surface such that $d$ is
an isotropic or a non isotropic vector. We also give the method to compute
isophote curves of surfaces of revolution. Subsequently, we show the
relationship between isophote curves and slant(general) helices on surfaces
of revolution obtained by revolving a curve by Euclidean rotations. Finally,
we give an example to compute isophote curves on isotropic surfaces of
revolution.
\end{abstract}

\renewcommand\leftmark {\centerline{  \rm On Geometry of Isophote Curves }}
\renewcommand\rightmark {\centerline{ \rm On Geometry of Isophote Curves }}

\renewcommand{\thefootnote}{}
\footnote{{ {Corresponding author:}} Dae Won Yoon.}
 \footnote{%
2010 \textit{AMS Mathematics Subject Classification:} 53A35, 53Z05.}

\footnote{{\ {Key words and phrases:}} Galilean space, isophote curve,
surfaces of revolution.
\par
The second author was supported by Basic Science Research Program through
the National Research Foundation of Korea funded by the Ministry of
Education(NRF-2018R1D1A1B07046979).}


\section{Introduction}

\renewcommand{\theequation}{1.\arabic{equation}} \setcounter{equation}{0}

The isophote curve method is one of the most efficient methods that can be
used to analyze and visualize surfaces by lines of equal light intensity.
Isophote curve whose normal vectors make a constant angle with a fixed
vector(the axis) is one of the curves to characterize surfaces such as
parameter, geodesics and asymptotic curves or lines of curvature. Moreover,
this curve is used in computer graphics and it is also interesting to study
for geometry.

The isophote curve of a given surface is calculated with two steps: firstly
the normal vector field $n(s,t)$ of the surface is computed, and secondly
the surface point is traced as%
\begin{equation*}
\frac{\left\langle n(s,t),d\right\rangle }{\left\Vert n(s,t)\right\Vert }%
=\cos \beta ,\text{ }
\end{equation*}%
where $\beta $ is a constant angle($0\leq \beta \leq \frac{\pi }{2}$).

Isophote curve is called a silhouette curve when the angle $\beta $ is given
as a right angle such that \ 
\begin{equation*}
\frac{\left\langle n(s,t),d\right\rangle }{\left\Vert n(s,t)\right\Vert }%
=\cos \frac{\pi }{2}=0,
\end{equation*}%
where $d$ is the fixed vector.

From past to present, there have been a lot of researchers about isophote
curves and their characterizations in \cite{dog,dog2,kim,koe}.

In this paper, our aim is to investigate isophote curves on surfaces in
Galilean space and find its axis $d$ such that it is an isotropic and a non
isotropic vector by means of the Galilean Darboux frame. According to the
axis $d $, we split our studies into two cases to find the axis of isophote
curves lying on a surface in Galilean space. Moreover, we give the method to
compute isophote curves of surfaces of revolution obtained by revolving a
curve by Euclidean and isotropic rotations.

\section{Preliminaries}

\renewcommand{\theequation}{2.\arabic{equation}} \setcounter{equation}{0}

In accordance with the Erlangen Program, due to F. Klein, each geometry is
associated with a group of transformations, and hence there are as many
geometries as groups of transformations. Associated with group of
transformations that in physics guarantees the invariance of many mechanical
systems, the Galilei group, is the so-called Galilean geometry. That is,
Galilean geometry is one of the nine Cayley-Klein geometries with projective
signature $(0,0,+,+)$. The absolute of the Galilean geometry is an ordered
triple $\{ \omega, f, I \}$, where $\omega$ is the ideal (absolute) plane, $%
f $ the line in $\omega$ and $I$ the fixed elliptic involution of $f$.

We introduce homogeneous coordinates in ${G}_{3}$ in such a way that the
absolute plane $\omega $ is given by $x_{0}=0$, the absolute line $f$ by $%
x_{0}=x_{1}=0$ and the elliptic involution by $(0:0:x_{2}:x_{3})\rightarrow
(0:0:x_{3}:-x_{2})$.

The group of motions of $G_{3}$ is a six-parameter group given (in affine
coordinates) by 
\begin{equation*}
\begin{aligned} \bar x & = a + x,\\ \bar y & = b + c x + y \cos \varphi + z
\sin \varphi,\\ \bar z & = d + e x - y \sin\varphi + z \cos \varphi.
\end{aligned}
\end{equation*}

A plane is called Euclidean if it contains $f$, otherwise it is called
isotropic or i.e., planes $x=consant$ are Euclidean, and so is the plane $%
\omega $. Other planes are isotropic. In other words, an isotropic plane
does not involve any isotropic direction.

A Galilean scalar product of two vectors $x=(x_{1},y_{1},z_{1})$ and $%
y=(x_{2},y_{2},z_{2})$ in the Galilean 3-space $G_{3}$ is defined as 
\begin{equation*}
\langle x,y\rangle =%
\begin{cases}
x_{1}x_{2},\qquad \qquad & \text{if }~~x_{1}\neq 0\quad \text{or}\quad
x_{2}\neq 0, \\ 
y_{1}y_{2}+z_{1}z_{2},\quad & \text{if}\quad x_{1}=0\quad \text{and}\quad
x_{2}=0%
\end{cases}%
\end{equation*}%
and a Galilean norm of $x$ is given by 
\begin{equation*}
||x||=%
\begin{cases}
|x_{1}|,\qquad \qquad & \text{if }~~x_{1}\neq 0, \\ 
\sqrt{y_{1}^{2}+z_{1}^{2}},\quad & \text{if}\quad x_{1}=0.%
\end{cases}%
\end{equation*}

A Galilean cross product of $x$ and $y$ on $G_{3}$ is defined by 
\begin{equation*}
x\times y=%
\begin{vmatrix}
0 & e_{2} & e_{3} \\ 
x_{1} & y_{1} & z_{1} \\ 
x_{2} & y_{2} & z_{2}%
\end{vmatrix}%
,
\end{equation*}%
where $e_{2}=(0,1,0)$ and $e_{3}=(0,0,1)$, \cite{mol,ro}.

Let $\alpha $ be an admissible curve of the class ${C}^{\infty }$ in ${G}%
_{3},$ and parametrized by the invariant parameter $s$, defined by%
\begin{equation*}
\alpha (s)=\left( s,f(s),g(s)\right) .
\end{equation*}%
Then the Frenet frame fields of $\alpha (s)$ are given by 
\begin{equation*}
\begin{aligned} T(s) &= \alpha'(s),\\ N(s) &= \frac {1}{\kappa(s)}
\alpha''(s),\\ B(s) &= T(s) \times N(s), \end{aligned}
\end{equation*}%
where the curvature $\kappa (s)$ and the torsion $\tau (s)$ of $\alpha (s)$
are written as, respectively, 
\begin{eqnarray*}
\kappa \left( s\right) &=&\sqrt{f^{\prime \prime }\left( s\right)
^{2}+g^{\prime \prime }\left( s\right) ^{2}}, \\
\tau \left( s\right) &=&\frac{\det \left( \alpha ^{\prime }\left( s\right)
\alpha ^{\prime \prime }\left( s\right) \alpha ^{\prime \prime \prime
}\left( s\right) \right) }{\kappa ^{2}\left( s\right) }.
\end{eqnarray*}%
Here $T,$ $N$ and $B$ are said to be the tangent, principal normal and
binormal vectors of $\alpha (s)$.

\noindent On the other hand, the Frenet formula of the curve is given by
(cf. \cite{Pav})%
\begin{equation}
\begin{aligned} T^{\prime } &=&\kappa N, \\ N^{\prime } &=&\tau B, \\
B^{\prime } &=&-\tau N. \end{aligned}  \label{B3}
\end{equation}

Consider a $C^{r}$-regular surface $M$, $r\geq 1$, in $G_{3}$ parameterized
by 
\begin{equation*}
\mathbf{X}(u_{1},u_{2})=(x(u_{1},u_{2}),y(u_{1},u_{2}),z(u_{1},u_{2})).
\end{equation*}%
We denote by $x_{u_{i}}$, $y_{u_{i}}$ and $z_{u_{i}}$ the partial
derivatives of the functions $x$, $y$ and $z$ with respect to $u_{i}$ ($%
i=1,2 $), respectively.

On the other hand, the matrix of the first fundamental form $ds^{2}$ of a
surface $M$ in $G_{3}$ is given by 
\begin{equation*}
ds^{2}=%
\begin{pmatrix}
ds_{1}^{2} & 0 \\ 
0 & ds_{2}^{2}%
\end{pmatrix}%
,
\end{equation*}%
where $ds_{1}^{2}=(g_{1}du_{1}+g_{2}du_{2})^{2}$ and $%
ds_{2}^{2}=h_{11}du_{1}^{2}+2h_{12}du_{1}du_{2}+h_{22}du_{2}^{2}$. Here $%
g_{i}=x_{u_{i}}$ and $h_{ij}=\langle \tilde{\mathbf{X}}_{u_{i}},\tilde{%
\mathbf{X}}_{u_{j}}\rangle $ $(i,j=1,2)$ means the Euclidean scalar product
of the projections $\tilde{\mathbf{X}}_{u_{i}}$ of vectors $\mathbf{X}%
_{u_{i}}$ onto the $yz$-plane.

The unit normal vector field $n$ of a surface $M$ is defined by 
\begin{equation*}
n=\frac{1}{\omega }%
(0,x_{u_{2}}z_{u_{1}}-x_{u_{1}}z_{u_{2}},x_{u_{1}}y_{u_{2}}-x_{u_{2}}y_{u_{1}}),
\end{equation*}%
where the positive function $\omega $ is given by 
\begin{equation*}
\omega =\sqrt{%
(x_{u_{2}}z_{u_{1}}-x_{u_{1}}z_{u_{2}})^{2}+(x_{u_{1}}y_{u_{2}}-x_{u_{2}}y_{u_{1}})^{2}%
}.
\end{equation*}


Let $\{T,Q,n\}$ be a Galilean Darboux frame of $\alpha (s)$ with $T$ as the
tangent vector of a curve $\alpha (s)$ in $G_{3}$ and $n$ be the unit normal
to a surface and $Q=n\times T$. Then the Galilean Darboux frame is expressed
as 
\begin{equation}
\begin{aligned} T^{\prime } &=k_{g}Q+k_{n}n, \\ Q^{\prime } &=\tau _{g}n, \\
n ^{\prime } &=-\tau _{g}Q, \end{aligned}  \label{B4}
\end{equation}%
where $k_{g}$, $k_{n}$ and $\tau _{g}$ are the geodesic curvature, normal
curvature and geodesic torsion of $\alpha (s)$ on $M$, respectively. Also, %
\eqref{B4} implies 
\begin{equation}
\begin{aligned} \kappa ^{2}& =k_{g}^{2}+k_{n}^{2},\quad \tau =-\tau
_{g}+\frac{k_{g}^{\prime }k_{n}-k_{g}k_{n}^{\prime
}}{k_{g}^{2}+k_{n}^{2}},\\ k_{g} & =k\cos \phi \text{ and }k_{n}=-k\sin
\phi, \end{aligned}  \label{B5}
\end{equation}%
where $\phi $ is an angle between the surface normal vector $n$ and the
binormal vector $B$ of $\alpha ,$ (\cite{sa}). A curve $\alpha (s)$ is a
geodesic (an asymptotic curve or a line of curvature) if and only if $k_{g}$
( $k_{n}$ or $\tau _{g}$) vanishes, respectively.

On the other hand, the usual transformation between the Galilean Frenet
frames and the Darboux frames takes the form 
\begin{eqnarray}
Q &=&\cos \phi N+\sin \phi B,  \label{B6} \\
n &=&-\sin \phi N+\cos \phi B.  \notag
\end{eqnarray}

Artykbaev was introduced an angle between two vectors in Galilean space as
follows:

\begin{defn}
(\cite{Art}) Let $x=(1,x_{2},x_{3})$ and $y=(1,y_{2},y_{3})$ be two unit
non-isotropic vectors in $G{_{3}}$. Then an angle $\vartheta $ between $x$
and $y$ is defined by%
\begin{equation}
\vartheta =\sqrt{(y_{2}-x_{2})^{2}+(y_{3}-x_{3})^{2}}.  \label{B7}
\end{equation}
\end{defn}

\begin{defn}
(\cite{Art}) An angle between a unit non-isotropic vector $x=(1,x_{2},x_{3})$
and an isotropic vector $y=(0,y_{2},y_{3})$ in $G{_{3}}$ is defined by 
\begin{equation}
\varphi =\frac{x_{2}y_{2}+x_{3}y_{3}}{\sqrt{y_{2}{}^{2}+y_{3}^{2}}}.
\label{B8}
\end{equation}
\end{defn}

\begin{defn}
(\cite{Art}) An angle $\theta $ between two isotropic vectors $%
x=(0,x_{2},x_{3})$ and $y=(0,y_{2},y_{3})$ parallel to the Euclidean plane
in $G{_{3}}$ is equal to the Euclidean angle between them. That is,%
\begin{equation}
\cos \theta =\frac{x_{2}y_{2}+x_{3}y_{3}}{\sqrt{x_{2}{}^{2}+x_{3}^{2}}\sqrt{%
y_{2}{}^{2}+y_{3}^{2}}}.  \label{B9}
\end{equation}
\end{defn}

\section{ The axis of an isophote curve in Galilean Space}

\renewcommand{\theequation}{3.\arabic{equation}} \setcounter{equation}{0}

The starting point of this section is to get the fixed vector $d$ of an
isophote curve via its Galilean Darboux frame.

Let $M$ be an admissible regular surface and $\alpha :I\subset R\rightarrow
M $ be an unit speed curve parametrized by $\alpha(s)=(s, \alpha_2(s),
\alpha_3(s))$ as an isophote curve for some $s \in I$.

In order to prove the results, we split it into two cases according to the
fixed vector $d$.

\vskip 0.2 cm

\textbf{Case 1.} $d$ is an unit isotropic vector.

Since $n$ is the unit isotropic normal vector of a surface $M$, we have 
\begin{equation}
\langle n,d\rangle =\cos \theta =\text{constant}.  \label{C1}
\end{equation}%
If we differentiate $\langle T,d\rangle =0$ with respect to $s$, using the
Galilean Darboux frame \eqref{B4}, then we obtain 
\begin{equation}
k_{g}\left\langle Q,d\right\rangle +k_{n}\langle n,d\rangle =0,  \label{C2}
\end{equation}%
which implies 
\begin{equation}
\left\langle Q,d\right\rangle =-\frac{k_{n}}{k_{g}}\cos \theta .  \label{C3}
\end{equation}%
Taking account of the derivative of \eqref{C1} we get 
\begin{equation}
\tau _{g}\left\langle Q,d\right\rangle =0,  \label{C4}
\end{equation}%
where if \ $\left\langle Q,d\right\rangle =0$, $k_{n}=0$ which means that $%
\alpha $ should be an asymptotic curve or $\tau _{g}=0$ which means that $%
\alpha $ should be a line of curvature. Then, for $k_{n}=0,$ $d$ can be
written as 
\begin{equation}
d=\cos \theta n,  \label{C5}
\end{equation}%
since $d$ is a constant vector, $\tau _{g}$ should be equal zero. Also this
is the trivial result.

For $\tau _{g}=0,d$ can be written as%
\begin{equation}  \label{C6}
d=-\frac{k_{n}}{k_{g}}\cos \theta Q+\cos \theta n.
\end{equation}
Since $\left\Vert d\right\Vert =1$, we get%
\begin{equation}  \label{C7}
\frac{k_{n}}{k_{g}}=\pm \tan \theta.
\end{equation}%
In this situation, we conclude that $\phi =\pm \theta $ or $\phi =\pi \pm
\theta .$

From \eqref{B5} and \eqref{B6} in terms of the Galilean Frenet frame, we get%
\begin{equation}
d=(-\frac{k_{n}}{k}\cos \theta -\frac{k_{g}}{k}\sin \theta )N+(-\frac{k_{n}}{%
k}\sin \theta +\frac{k_{g}}{k}\cos \theta )B.  \label{C8}
\end{equation}%
If we differentiate \eqref{C6} using \eqref{C7} and $\tau _{g}=0$, we get $%
d^{\prime }=0,$ that is, $d$ is a constant isotropic vector. From now on, we
suppose if $\alpha $ is a unit-speed isophote curve, then $\alpha $ is also
a line of curvature.

\begin{thm}
Let $\alpha $ be a unit-speed isophote curve on a surface $M$ in $G_3$ with
a fixed unit isotropic vector $d$ as the axis of the isophote curve. In that
case, we have the following:

$i)$ If $\alpha $ is a geodesic curve, then $\alpha $ is a straight line.

$ii)$ If $\alpha $ is an asymptotic curve on $M$, then it is a plane curve,
and the fixed vector $d$ is spanned by $B.$
\end{thm}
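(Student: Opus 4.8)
The plan is to exploit the structural identities already obtained in the text, namely that a unit-speed isophote curve with isotropic axis $d$ is necessarily a line of curvature (so $\tau_g = 0$), together with the decomposition \eqref{C6}, the norm condition \eqref{C7}, and the Darboux equations \eqref{B4}. Each part then amounts to feeding the extra hypothesis ($k_g = 0$ for a geodesic, $k_n = 0$ for an asymptotic curve) into these formulas and reading off the geometric conclusion.

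For part $i)$, suppose $\alpha$ is a geodesic, i.e. $k_g \equiv 0$. First I would note that from $\kappa^2 = k_g^2 + k_n^2$ in \eqref{B5} we then have $\kappa = |k_n|$, so I cannot immediately conclude $\kappa = 0$ from $k_g$ alone; instead the argument must use the isophote/line-of-curvature structure. The key obstruction-resolving observation is the relation \eqref{C7}, $k_n/k_g = \pm\tan\theta$: with $k_g \equiv 0$ this forces $k_n \equiv 0$ as well (a nonzero numerator over a zero denominator is incompatible with a finite constant, and $\theta$ is a fixed angle), unless one is in the degenerate trivial branch \eqref{C5}. Hence $\kappa^2 = k_g^2 + k_n^2 = 0$, so $\kappa \equiv 0$, and by the Frenet formula \eqref{B3} $T' = \kappa N = 0$, i.e. $T$ is constant; integrating, $\alpha$ is a straight line. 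I would phrase this carefully so that the reader sees it is the combination ``geodesic $+$ isophote'' that is doing the work, not ``geodesic'' alone.

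For part $ii)$, suppose $\alpha$ is an asymptotic curve, i.e. $k_n \equiv 0$. Substituting $k_n = 0$ (and $\tau_g = 0$, which holds because $\alpha$ is a line of curvature) into the Darboux system \eqref{B4} gives $T' = k_g Q$, $Q' = 0$, $n' = 0$, so both $Q$ and $n$ are constant vectors along $\alpha$. Since $T$, $Q$, $n$ span $G_3$ and $T' = k_g Q$ lies in the constant direction $Q$, the curve $\alpha$ stays in the plane through $\alpha(s_0)$ spanned by $T(s_0)$ and $Q$; equivalently, $\langle n, \alpha - \alpha(s_0)\rangle$ is constant, so $\alpha$ is a plane curve. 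Finally, with $k_n = 0$ the decomposition \eqref{C6} collapses to $d = \cos\theta\, n$, and via the Frenet–Darboux transformation \eqref{B6}, $n = -\sin\phi\, N + \cos\phi\, B$; but the line-of-curvature/isophote conditions pin down $\phi$ (from ``$\phi = \pm\theta$ or $\phi = \pi\pm\theta$'' together with $k_n = -k\sin\phi = 0$, forcing $\sin\phi = 0$ when $k\neq 0$), so $n = \pm B$ and therefore $d$ is spanned by $B$, as claimed.

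The main obstacle I anticipate is the bookkeeping around the degenerate branches: in part $i)$ one must argue away the possibility that \eqref{C7} is vacuous because $k_g$ and $k_n$ vanish simultaneously on a subinterval, and in part $ii)$ one must be careful that $n = \pm B$ is only meaningful where $\kappa \neq 0$ (at points with $\kappa = 0$ the Frenet frame is not defined, but there the curve is locally a line and the statement is trivial). Handling these edge cases cleanly — rather than the core computations, which are immediate from \eqref{B3}, \eqref{B4}, \eqref{B6}, \eqref{C6}, \eqref{C7} — is where the real care is needed.
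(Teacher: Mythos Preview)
Your proposal is correct and follows essentially the same strategy as the paper, feeding the extra hypothesis into the line-of-curvature condition $\tau_g=0$ and the Darboux/Frenet relations. The paper's execution is marginally more direct: for part $i)$ it uses \eqref{C2} (so $k_g=0$ gives $k_n\cos\theta=0$, hence $k_n=0$) rather than the ratio \eqref{C7}, and for part $ii)$ it reads off $d=\frac{k_g}{k}\cos\theta\,B$ straight from \eqref{C8} and gets $\tau=0$ from the torsion formula in \eqref{B5}, bypassing your detour through constancy of $n$ and the identification $n=\pm B$ via $\phi$.
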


\begin{proof}\ $i)$ If $\alpha $ is a geodesic curve, then we have $k_{g}=0$ and
so from \eqref{C2} it follows that $k_{n}=0,$ also $\tau _{g}=0.$ By
substituting $k_{g}$ and $k_{n}$ into \eqref{B5}, we get $\kappa =0$, that is, $\alpha$ is a straight line.

$ii)$ If $\alpha $ is an asymptotic curve, we have $k_{n}=0.$ From %
\eqref{B5} and \eqref{C8}, we obtain that%
\begin{equation*}
d=\frac{k_{g}}{k}\cos \theta B.
\end{equation*}%
Also, by substituting $\tau _{g}=0$ and $k_{n}=0$ into \eqref{B6}, we get $%
\tau =0.$ It means that $\alpha $ is a plane curve.
\end{proof}

\begin{thm}
Let $\alpha $ be a unit-speed isophote curve on a surface $M$ in $G_3$ with
a fixed unit isotropic vector $d$ as the axis of the isophote curve. The
axis $d$ is perpendicular to the principal normal line of $\alpha $ if and
only if \ either $\alpha $ is a straight line, or an asymptotic curve on $M$
with taking $\dfrac{k_{n}}{k_{g}}=\tan \theta $ or $\alpha $ is a curve with 
$\frac{k_{n}}{k_{g}}=-\tan \theta .$
\end{thm}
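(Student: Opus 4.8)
The plan is to compute the Galilean scalar product $\langle d, N\rangle$ directly from the expression \eqref{C8} for $d$ in the Galilean Frenet frame, and to analyze when it vanishes. From \eqref{C8} the coefficient of $N$ is $-\frac{k_n}{k}\cos\theta - \frac{k_g}{k}\sin\theta$, so $\langle d, N\rangle = 0$ is equivalent to the algebraic condition $k_n\cos\theta + k_g\sin\theta = 0$, i.e. $\frac{k_n}{k_g} = -\tan\theta$ (assuming $k_g \neq 0$; the case $k_g = 0$ forces $k_n = 0$ by \eqref{C2} and hence, via \eqref{B5}, that $\alpha$ is a straight line, covering the first alternative). So the main work is to reconcile this with the two sign possibilities in \eqref{C7}, which came from the normalization $\|d\| = 1$.

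The key steps, in order: first, record that under the standing hypothesis $\tau_g = 0$ (established just before Theorem 3.1), $d$ is given by \eqref{C6} and satisfies \eqref{C7}, so $\frac{k_n}{k_g} = \pm\tan\theta$. Second, handle the branch $\frac{k_n}{k_g} = \tan\theta$: substituting into the $N$-coefficient of \eqref{C8} gives $-\frac{k_g}{k}\tan\theta\cos\theta - \frac{k_g}{k}\sin\theta \cdot$(something) — here one must be careful, since $k_n = k_g\tan\theta$ does not make the coefficient vanish unless additionally $k_g = 0$ or we are on the asymptotic locus. I would show that in this branch $\langle d, N\rangle = 0$ forces either $\sin\theta = 0$ (a degenerate/trivial case) or $k_n = 0$, i.e. $\alpha$ is an asymptotic curve; combined with $\frac{k_n}{k_g} = \tan\theta$ this is exactly the "asymptotic curve on $M$ with $\frac{k_n}{k_g} = \tan\theta$" alternative. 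Third, handle the branch $\frac{k_n}{k_g} = -\tan\theta$: substituting into the $N$-coefficient of \eqref{C8} gives $-\frac{k_n}{k}\cos\theta - \frac{k_g}{k}\sin\theta = \frac{k_g}{k}\tan\theta\cos\theta - \frac{k_g}{k}\sin\theta = \frac{k_g}{k}(\sin\theta - \sin\theta) = 0$ identically, so every such curve has $d \perp N$; this is the third alternative. Finally, for the converse, start from $\langle d, N\rangle = 0$, derive $k_n\cos\theta + k_g\sin\theta = 0$, split on whether $k_g$ vanishes, and in each case land in one of the three listed families.

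The main obstacle I anticipate is bookkeeping the sign branches from \eqref{C7} consistently with \eqref{C8}: the statement's three alternatives are not mutually exclusive in an obvious way (a straight line is vacuously an asymptotic curve), and I would need to check that the "asymptotic with $\frac{k_n}{k_g} = \tan\theta$" case is genuinely realizable — i.e. that $k_n = 0$ is compatible with the ratio being $\tan\theta$ only in a limiting sense, or else interpret the ratio condition as selecting which sign branch of \eqref{C7} one sits on. The cleanest route is probably to avoid dividing by $k_g$ wherever possible and work with the homogeneous relation $k_n\cos\theta + k_g\sin\theta = 0$ together with $\kappa^2 = k_g^2 + k_n^2$ from \eqref{B5}, treating the straight-line case $\kappa = 0$ separately from the outset. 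A secondary subtlety is the meaning of "perpendicular" for an isotropic vector $d$ and the (isotropic or non-isotropic) principal normal $N$: I would use the Galilean scalar product as defined in the Preliminaries, noting that since both $d$ and $N$ are isotropic (they lie in the $yz$-plane, having zero first coordinate), $\langle d, N\rangle$ is just their Euclidean inner product in that plane, so Definition with \eqref{B9} applies and "perpendicular" is unambiguous.
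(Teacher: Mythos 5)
Your proposal is correct and follows essentially the same route as the paper: compute $\left\langle N,d\right\rangle$ from the expression \eqref{C8}, split on the two sign branches of \eqref{C7}, observe that the branch $\frac{k_n}{k_g}=-\tan\theta$ makes the $N$-coefficient vanish identically, and in the branch $\frac{k_n}{k_g}=\tan\theta$ reduce to $k_g=0$ (straight line, via Theorem 3.1) or $\sin\theta=0$ (hence $k_n=0$, asymptotic curve). Your extra care about the non-exclusive alternatives and the degenerate ratio in the asymptotic case is a reasonable refinement but does not change the argument.
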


\begin{proof} If $\alpha $ is a unit-speed isophote curve with $\frac{k_{n}}{k_{g}%
}=\tan \theta $, then from \eqref{C8}, we get
\begin{equation*}
\left\langle N,d\right\rangle =-2\frac{k_{g}}{k}\sin \theta =0,
\end{equation*}%
from this equation, we have $k_{g}=0$ or $\sin \theta =0$.

If  $k_{g}=0$  then, from Theorem 3.1,  $\alpha $ is a straight line.

If  $\sin \theta =0$, then  $k_{n}=0,$ that is,  $\alpha $ is an asymptotic curve.

 If we take $\frac{k_{n}}{k_{g}}=-\tan
\theta $, then we can easily get $\left\langle N,d\right\rangle =0.$
\end{proof}

\begin{thm}
Let $\alpha $ be a unit-speed isophote curve on a surface $M$ in $G_3$ with
a fixed unit isotropic vector $d$ as the axis of the isophote curve. The
axis $d$ is perpendicular to the principal binormal line of $\alpha $ such
that $\dfrac{k_{n}}{k_{g}}=\tan \theta$ if and only if \ $\theta $ equals $%
\dfrac{\pi }{4}$.
\end{thm}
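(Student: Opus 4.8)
The plan is to argue entirely inside the Galilean Frenet frame, where the axis has already been written out in \eqref{C8} as
\[
d=\left(-\frac{k_{n}}{k}\cos \theta -\frac{k_{g}}{k}\sin \theta \right)N+\left(-\frac{k_{n}}{k}\sin \theta +\frac{k_{g}}{k}\cos \theta \right)B .
\]
Recall from the discussion preceding Theorem 3.1 that $\alpha$ is forced to be a line of curvature, $N$ and $B$ are unit isotropic vectors spanning the $yz$-plane, and they are orthogonal in the Euclidean sense used for isotropic vectors; since $d$ is itself isotropic, the coefficients in \eqref{C8} are literally its components in the orthonormal basis $\{N,B\}$. Consequently, saying that $d$ is perpendicular to the principal binormal line of $\alpha$ is the same as saying that the $B$-coefficient vanishes, i.e.
\[
\left\langle d,B\right\rangle =-\frac{k_{n}}{k}\sin \theta +\frac{k_{g}}{k}\cos \theta =0 .
\]

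For the forward direction I would insert the standing hypothesis $\frac{k_{n}}{k_{g}}=\tan \theta$ into the relation above. Factoring out $\frac{k_{g}}{k}$ and using $-\tan\theta\sin\theta+\cos\theta=\frac{\cos 2\theta}{\cos\theta}$ rewrites the equation as $\frac{k_{g}}{k\cos\theta}\cos 2\theta =0$. Here $k\neq 0$ and $k_{g}\neq 0$, since a vanishing $k_{g}$ would make $\alpha$ a geodesic and hence, by Theorem 3.1, a straight line (for which the frame degenerates), and $\cos\theta\neq 0$ is automatic because $\tan\theta$ appears in the hypothesis. Therefore $\cos 2\theta =0$, and as the isophote angle lies in $[0,\frac{\pi}{2}]$ this gives exactly $\theta=\frac{\pi}{4}$.

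For the converse, substituting $\theta=\frac{\pi}{4}$ into $\frac{k_{n}}{k_{g}}=\tan\theta$ gives $k_{n}=k_{g}$, so the $B$-coefficient in \eqref{C8} becomes $\frac{\sqrt{2}}{2k}\left(k_{g}-k_{n}\right)=0$, which is precisely $\langle d,B\rangle=0$; hence $d$ is perpendicular to the principal binormal line. The whole argument reduces to one trigonometric simplification, so I do not anticipate any serious difficulty; the only point requiring care is the bookkeeping of the degenerate subcases $k_{g}=0$ and $\cos\theta=0$, which must be excluded — the first via Theorem 3.1, the second via the hypothesis $\frac{k_{n}}{k_{g}}=\tan\theta$ — before one may divide and conclude $\cos 2\theta=0$.
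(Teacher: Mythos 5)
Your proposal is correct and follows essentially the same route as the paper: both compute $\langle B,d\rangle$ from \eqref{C8}, substitute $\frac{k_n}{k_g}=\tan\theta$, rule out the geodesic case, and conclude $\cos^2\theta-\sin^2\theta=0$, hence $\theta=\frac{\pi}{4}$ on $[0,\frac{\pi}{2}]$. Your version is in fact slightly more careful than the paper's, keeping the $\frac{1}{\cos\theta}$ factor and spelling out the trivial converse direction, but the underlying argument is identical.
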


\begin{proof}\ If $\alpha$ is a unit-speed isophote curve with $\frac{k_{n}}{k_{g}%
}=\tan \theta $, then from \eqref{C8}, we get
\begin{equation*}
\left\langle B,d\right\rangle =\frac{k_{g}}{k}(-\sin ^{2}\theta +\cos
^{2}\theta )=0.
\end{equation*}%
Since $\alpha $ is a non-geodesic curve,  $-\sin ^{2}\theta
+\cos ^{2}\theta =0.$ So, $\tan \theta =1$. We know that $0\leq\theta \leq
\dfrac{\pi }{2},$ then we get $\theta =\dfrac{\pi }{4}.$
\end{proof}

\begin{thm}
If $\alpha $ is a silhouette curve on $M,$ and $d$ is a unit isotropic
vector such that it is parallel to $Q$, then the curve $\alpha $ is a plane
curve.
\end{thm}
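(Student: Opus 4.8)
The plan is to unwind the definition of a silhouette curve together with the hypothesis $d \parallel Q$ and feed it into the Galilean Darboux relations \eqref{B4}. Recall that $\alpha$ being a silhouette curve means $\langle n, d\rangle = 0$, and since $d$ is a unit isotropic vector parallel to $Q$ we may write $d = Q$ (up to sign). First I would differentiate the relation $d = Q$ with respect to $s$: because $d$ is a \emph{constant} vector, $d' = 0$, so \eqref{B4} gives $Q' = \tau_g n = 0$, forcing $\tau_g = 0$. Hence $\alpha$ is a line of curvature, which is consistent with the discussion preceding Theorem~3.1.

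Next I would exploit the silhouette condition itself. Since $d = Q$, the equation $\langle n, d\rangle = 0$ is automatic (as $\langle Q, n\rangle = 0$), but the useful information comes from $\langle T, d\rangle$. The curve $\alpha$ is an isophote (indeed a silhouette) curve, so its tangent satisfies $\langle T, d\rangle = 0$ as well; differentiating this and using \eqref{B4} gives $k_g\langle Q, d\rangle + k_n\langle n, d\rangle = 0$. With $d = Q$ this reduces to $k_g = 0$, so $\alpha$ is a geodesic. Then \eqref{C2}–\eqref{B5} (exactly as in the proof of Theorem~3.1(i)) give $k_n = 0$ and $\kappa = 0$; but a straight line is trivially a plane curve, so the conclusion holds — though I should double-check whether the intended reading is the weaker ``$\alpha$ is a plane curve'' obtained just from $\tau = -\tau_g + (k_g' k_n - k_g k_n')/(k_g^2+k_n^2)$ vanishing once $\tau_g = 0$ and $k_g = 0$, via \eqref{B6} giving $n = \cos\phi\, B$ with $\phi$ constant, hence $\tau = 0$.

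Alternatively, and perhaps closer to the authors' intent, I would avoid claiming $k_g=0$ and argue as follows: from $\tau_g = 0$ and \eqref{B5} we have $\tau = (k_g' k_n - k_g k_n')/(k_g^2 + k_n^2)$; combining $\tau_g=0$ with \eqref{B6} yields $n = -\sin\phi\, N + \cos\phi\, B$ and $Q = \cos\phi\, N + \sin\phi\, B$, and since $d = Q$ is constant while $Q' = 0$, differentiating the frame relation \eqref{B6} shows $\phi' = 0$, i.e.\ $\phi$ is constant. Feeding a constant $\phi$ back into the Frenet equations \eqref{B3} together with \eqref{B6} forces $\tau = 0$, so $\alpha$ lies in a plane spanned by $N$ and $B$ (an isotropic plane), which is the assertion.

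The main obstacle I anticipate is pinning down exactly which chain of implications the authors regard as the proof: the hypotheses $\langle T,d\rangle = 0$ (isophote), $\langle n,d\rangle = 0$ (silhouette), and $d \parallel Q$ are jointly quite rigid, and a naive application collapses everything to a straight line rather than to a genuinely curved plane curve. I would therefore first verify which of $\langle T, d\rangle = 0$ and $\langle n,d\rangle = 0$ is being used to derive $\tau_g = 0$ versus $k_g = 0$, and present the argument through $\phi = \text{constant} \Rightarrow \tau = 0$, noting the straight-line case as a degenerate subcase, so that the statement ``$\alpha$ is a plane curve'' is correct in all cases.
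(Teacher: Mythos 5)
Your main line of argument is exactly the paper's proof: differentiating $d=\pm Q$ gives $\tau_g n=0$, hence $\tau_g=0$; differentiating $\langle T,d\rangle=0$ gives $k_g\langle Q,d\rangle+k_n\langle n,d\rangle=0$, and since the silhouette condition makes $\langle n,d\rangle=0$ while $\langle Q,d\rangle=\pm1$, you get $k_g=0$; then $\tau=0$ from \eqref{B5}, so $\alpha$ is a plane curve. One correction to your side remark: the situation does \emph{not} collapse to a straight line as in Theorem~3.1(i), because there the conclusion $k_n=0$ used $\langle n,d\rangle=\cos\theta\neq0$, whereas here $\langle n,d\rangle=0$ makes that equation vacuous about $k_n$, so $k_n$ may be nonzero and the curve is a genuinely curved plane curve (your straight-line worry, and the detour through $\phi=\text{constant}$, are unnecessary).
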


\begin{proof}
If a fixed vector $d$ is  a unit isotropic
vector and  is parallel to  $Q$, then we have
$$
d=\pm Q, \quad \left\langle T,d\right\rangle =0.
$$
By differentiating above equations with respect to $s$, we obtain
\begin{equation*}
\tau_g n=0, \quad k_g \langle Q, d\rangle + k_n \langle n, d\rangle =0.
\end{equation*}  
Since $\alpha$ is a silhouette curve with $\langle n, d\rangle =0$, we get 
$$
\tau_g=0, \quad k_g=0,
$$
from this, we have $\tau =0.$ It means that $\alpha $ is a plane curve.
\end{proof}

\textbf{Case 2.} Now, our aim is to find a fixed unit non-isotropic vector $%
d $ as the axis of an isophote curve.

Since $n$ is the unit isotropic normal vector of a surface $M$, we have 
\begin{equation}
\langle n,d\rangle =\varphi =\text{constant}.
\end{equation}

Let $\alpha $ be a unit speed admissible isophote curve. If we differentiate 
\begin{equation}
\left\langle T,d\right\rangle =1  \label{C9}
\end{equation}%
with respect to $s,$ using\ the Galilean Darboux frame \eqref{B4} then we
have 
\begin{equation}
k_{g}\left\langle Q,d\right\rangle +k_{n}\left\langle n,d\right\rangle =0.
\label{C10}
\end{equation}%
It follows from \eqref{B8} that we find 
\begin{equation}
\left\langle Q,d\right\rangle =-\frac{k_{n}}{k_{g}}\varphi .  \label{C11}
\end{equation}%
Taking account of the derivative of $\langle n,d\rangle =\varphi $ and using
the Galilean Darboux frame \eqref{B4} 
\begin{equation}
\tau _{g}\left\langle Q,d\right\rangle =0,  \label{C12}
\end{equation}%
where if \ $\left\langle Q,d\right\rangle =0$, then from \eqref{C11} we get $%
k_{n}=0$ which means that \ $\alpha $ should be an asymptotic curve. Then,
for $k_{n}=0,$ $d$ can be written as 
\begin{equation}
d=T+\varphi n.  \label{C13}
\end{equation}%
Since $d$ is a constant vector, $k_{g}=$ $\varphi \tau _{g}$. Thus, we have
the following result:

\begin{coro}
\textit{Let $\alpha $ be a unit-speed isophote curve on a surface $M$ in $%
G_3 $ with a fixed unit non-isotropic vector $d$ as the axis of the isophote
curve. If $\alpha$ is a geodesic curve or a line of curvature, then $\alpha$
is a straight line.}
\end{coro}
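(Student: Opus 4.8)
The plan is to start from equation \eqref{C13}, namely $d = T + \varphi n$, which is the expression for the axis obtained in the case $\langle Q,d\rangle = 0$ (equivalently $k_n = 0$, so $\alpha$ is an asymptotic curve), together with the constraint $k_g = \varphi\,\tau_g$ derived from requiring $d$ to be constant. From here the argument splits according to the two hypotheses on $\alpha$.

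First I would handle the case where $\alpha$ is a geodesic curve. Then $k_g = 0$ by definition of a geodesic. Combining this with the already-established $k_n = 0$ (since we are in the branch $\langle Q,d\rangle = 0$) and feeding both into the first relation of \eqref{B5}, namely $\kappa^2 = k_g^2 + k_n^2$, gives $\kappa = 0$, so $\alpha$ is a straight line. This mirrors exactly the reasoning in part $(i)$ of Theorem 3.1.

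Next I would treat the case where $\alpha$ is a line of curvature, i.e. $\tau_g = 0$. Then the constraint $k_g = \varphi\,\tau_g$ immediately forces $k_g = 0$, which returns us to the geodesic situation just handled: with $k_g = 0$ and $k_n = 0$, equation \eqref{B5} again yields $\kappa = 0$, so $\alpha$ is a straight line. Thus in either hypothesis we land on $\kappa \equiv 0$, and $\alpha$ is a straight line, which is precisely the assertion of the corollary.

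I do not expect any real obstacle here: the corollary is a direct bookkeeping consequence of the normalization $\|d\| = 1$, the branch condition $k_n = 0$, and the constancy condition $k_g = \varphi\,\tau_g$, all of which are recorded in the lines preceding the statement. The only point requiring a word of care is making explicit that the hypothesis "$\alpha$ is an isophote curve with a non-isotropic axis" has already placed us in the branch $\langle Q,d\rangle = 0$ (the alternative $\tau_g = 0$ being subsumed once $k_g = \varphi\tau_g$ is invoked), so that $k_n = 0$ may be used freely; after that, both sub-cases collapse to $k_g = k_n = 0 \Rightarrow \kappa = 0$ via \eqref{B5}.
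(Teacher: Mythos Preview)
Your proposal is correct and follows essentially the same approach as the paper: the Corollary is placed immediately after the relations $k_n=0$ and $k_g=\varphi\,\tau_g$ obtained in the branch $\langle Q,d\rangle=0$, and you reproduce precisely that argument, splitting into the geodesic and line-of-curvature sub-cases and concluding $\kappa=0$ via \eqref{B5} in each. One small remark: your parenthetical claim that the alternative branch $\tau_g=0$ is ``subsumed once $k_g=\varphi\tau_g$ is invoked'' is not quite right, since that relation was derived only under $\langle Q,d\rangle=0$; the paper instead handles the remaining branch $\tau_g=0$ with $\langle Q,d\rangle\neq 0$ separately in the paragraph following the Corollary (via \eqref{C14}), arriving again at $k_g=k_n=0$.
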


If $\tau _{g}=0$, that is, $\alpha $ is a line of curvature, then $d$ can be
written as%
\begin{equation}  \label{C14}
d=T-\frac{k_{n}}{k_{g}}\varphi Q+\varphi n.
\end{equation}%
Since $d$ is a constant vector, $k_{g}=$ $k _{n}=0$, which implies $\kappa=0$%
, that is, $\alpha $ is a straight line.




\begin{thm}
Let $\alpha $ be a silhouette curve on $M$ and $d$ be a unit non-isotropic
vector.

$i)$ If $d$ lies in the plane spanned by $T$ and $Q$, then $\alpha $ is a
plane curve. 

$ii)$ If the axis $d$ is spanned by $T$, then $\alpha $ is a geodesic curve.
\end{thm}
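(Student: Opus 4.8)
The plan is to exploit the silhouette condition $\langle n,d\rangle =0$ together with the Galilean Darboux equations \eqref{B4}, exactly as in the proof of Theorem 3.4, but now with $d$ a unit non-isotropic vector so that the relevant normalization is $\langle T,d\rangle =1$ rather than $\langle T,d\rangle =0$. The general identities already derived in Case 2, namely \eqref{C10}, i.e. $k_{g}\langle Q,d\rangle +k_{n}\langle n,d\rangle =0$, and \eqref{C12}, i.e. $\tau_{g}\langle Q,d\rangle =0$, remain valid; plugging $\langle n,d\rangle =0$ into \eqref{C10} gives $k_{g}\langle Q,d\rangle =0$.

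For part $i)$, I would assume $d$ lies in $\mathrm{span}\{T,Q\}$, so that $\langle n,d\rangle=0$ is automatic and we may write $d=\langle T,d\rangle T+\langle Q,d\rangle Q = T + \langle Q,d\rangle Q$ using \eqref{C9}. Differentiating this expression with respect to $s$ and substituting the Darboux formulas \eqref{B4} yields $0=d'=(k_{g}+\langle Q,d\rangle')\,Q+(k_{n}+\tau_{g}\langle Q,d\rangle)\,n$ — wait, more carefully: $T'=k_gQ+k_nn$, $Q'=\tau_g n$, so $d' = k_gQ+k_nn+\langle Q,d\rangle' Q+\langle Q,d\rangle\tau_g n$. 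Setting the $n$-component to zero gives $k_n+\tau_g\langle Q,d\rangle=0$, and combining with $k_{g}\langle Q,d\rangle=0$ from \eqref{C10}: if $k_g\neq0$ then $\langle Q,d\rangle=0$, hence $k_n=0$ too, and by \eqref{B5} and \eqref{B6} one reads off $\tau=0$; if instead $k_g=0$, the situation reduces (via \eqref{C11}) to the asymptotic/line-of-curvature analysis already carried out above, again forcing $\tau=0$ through Corollary 3.6 and the displayed discussion preceding this theorem. Either way $\alpha$ is planar.

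For part $ii)$, assume $d$ is spanned by $T$; since $d$ is a unit vector and $\langle T,d\rangle=1$ with $T$ non-isotropic, $d=T$. Differentiating $d=T$ gives $0=d'=T'=k_{g}Q+k_{n}n$, and the $Q$-component vanishing yields $k_{g}=0$, which is precisely the condition that $\alpha$ be a geodesic curve on $M$ by the characterization stated after \eqref{B5}. (As a consistency check, the silhouette hypothesis $\langle n,d\rangle=\langle n,T\rangle=0$ holds trivially, so there is no conflict.)

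The only delicate point I anticipate is the case distinction in part $i)$: one must be careful that when $k_g=0$ the reasoning does not become circular, and should instead route through the already-established Corollary 3.6 and the remark that for a line of curvature $d=T+\varphi n$ forces $\kappa=0$. I do not expect any genuinely hard computation — the argument is a short differentiation-and-match-components exercise once the correct normalization $\langle T,d\rangle=1$ is used, mirroring Theorem 3.4 with the bookkeeping adjusted for the non-isotropic case.
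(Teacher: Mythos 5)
Your part ii) is correct and coincides with the paper's argument: $d=T$ constant gives $0=T'=k_{g}Q+k_{n}n$, hence $k_{g}=0$, i.e.\ $\alpha$ is a geodesic. The problem is in part i), in your branch $k_{g}\neq 0$. There you deduce $\langle Q,d\rangle=0$, then $k_{n}=0$, and assert that \eqref{B5} and \eqref{B6} let you ``read off'' $\tau=0$. They do not: with $k_{n}\equiv 0$, \eqref{B5} gives $\tau=-\tau_{g}$, and nothing in that branch controls $\tau_{g}$, because \eqref{C12}, $\tau_{g}\langle Q,d\rangle=0$, is vacuous exactly when $\langle Q,d\rangle=0$. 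The correct observation is that this branch is empty (or collapses into part ii)): if $\langle Q,d\rangle\equiv 0$ then, with $\langle T,d\rangle=1$ and the silhouette condition, $d=T$, and constancy of $d$ (equivalently your own $Q$-component equation $k_{g}+\langle Q,d\rangle'=0$) forces $k_{g}=0$, contradicting $k_{g}\neq 0$. Your other branch, $k_{g}=0$, is also justified shakily: you route it through \eqref{C11}, \eqref{C14} and Corollary 3.6, but those formulas divide by $k_{g}$ (and the geodesic half of that corollary was derived under $\langle Q,d\rangle=0$), so they are not applicable as cited; what actually closes that branch is simply $\tau_{g}=0$ from \eqref{C12} when $\langle Q,d\rangle\neq 0$, and then $\tau=0$ from \eqref{B5}.

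For comparison, the paper's proof is much shorter because it tacitly reads the hypothesis ``$d$ lies in the plane spanned by $T$ and $Q$'' as meaning the $Q$-component of $d$ is nonzero (the case $d\parallel T$ being part ii)): differentiating $\langle T,d\rangle=\pm 1$ and using $\langle n,d\rangle=0$ gives $k_{g}\langle Q,d\rangle=0$, hence $k_{g}=0$; differentiating $\langle n,d\rangle=0$ gives $\tau_{g}\langle Q,d\rangle=0$, hence $\tau_{g}=0$; then \eqref{B5} yields $\tau=0$. If you make the dichotomy explicit — either $\langle Q,d\rangle\neq 0$, which is exactly this two-line argument, or $\langle Q,d\rangle\equiv 0$, which forces $d=T$, $k_{g}=k_{n}=0$, $\kappa=0$, a straight line and hence a plane curve — your differentiation-of-$d$ computation becomes a complete (indeed slightly more careful than the paper's) proof; as written, the $k_{g}\neq 0$ branch contains an unjustified step.
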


\begin{proof}
$i)$ Since  $\alpha $ is a silhouette curve and $d$ is a unit non-isotropic
vector, we get%
\begin{equation} \label{C15}
\left\langle T,d\right\rangle =\pm 1.
\end{equation}%
If we differentiate \eqref{C15} with respect to $s$,  then we get
\begin{equation*}
k_{g}\left\langle Q,d\right\rangle =0.
\end{equation*}%
Since  $d$ is lied in the plane spanned by $T$ and $Q,$ we get $k_{g}=0$. Also,
if we differentiate $\left\langle n,d\right\rangle =0$ with respect to $s$,
we get%
\begin{equation*}
\tau _{g}\left\langle Q,d\right\rangle =0,
\end{equation*}%
it follows that  $\tau _{g}=0.$

Also, by substituting $\tau _{g}=0$ and $k_{g}=0$ into \eqref{B5}, we get $%
\tau =0.$  Thus,  $\alpha $ is a plane curve.



$ii)$ If  $d$ is spanned by $T$,  then we get
\begin{equation*}
d= T.
\end{equation*}%
If we differentiate above equation,  then  $d^{\prime } = k_{g}Q$, it follows that   $k_{g} =0$, that is, the curve is a geodesic curve.
\end{proof}

\section{Applications for Isophote Curves}

\renewcommand{\theequation}{4.\arabic{equation}} \setcounter{equation}{0}

We investigate an isophote curve among surfaces in Galilean space. Now we
give some examples for this subject. To see this, notice that in $G_{3}$
surfaces of revolution are obtained by revolving a curve by Euclidean or
isotropic rotations as follows, respectively,%
\begin{eqnarray}
\overline{x} &=&x,  \label{D1} \\
\overline{y} &=&y\cos t+z\sin t,  \notag \\
\overline{z} &=&-y\sin t+z\cos t,  \notag
\end{eqnarray}%
where $t$ is the Euclidean angle and 
\begin{eqnarray}
\overline{x} &=&x+ct,  \label{D2} \\
\overline{y} &=&y+xt+c\frac{t^{2}}{2},  \notag \\
\overline{z} &=&z,  \notag
\end{eqnarray}%
where $t\in \mathbb{R}$ and $c=\text{constant}>0$.

The trajectory of a single point under a Euclidean rotation is a Euclidean
circle 
\begin{equation*}
x=\text{constant}, \quad y^2 + z^2 = r^2, \quad r \in \mathbb{R. }
\end{equation*}
The invariant $r$ is the radius of the circle. Euclidean circles intersect
the absolute line $f$ in the fixed points of the elliptic involution $(F_1,
F_2)$.

The trajectory of a point under isotropic rotation is an isotropic circle
whose normal form is 
\begin{equation*}
z=\text{constant},\quad y=\frac{x^{2}}{2c}.
\end{equation*}%
The invariant $c$ is the radius of the circle. The fixed line of the
isotropic rotation is the absolute line $f\ $\cite{Milin}. For some more
studies, see \cite{dede, kazan}.

If a curve $\alpha (s)=(f(s),0,g(s)),$ $\left( g(s)>0\right) $ is rotated by
Euclidean rotations, then a surface of revolution is parametrized by%
\begin{equation}
S(s,t)=(f(s),g(s)\sin t,g(s)\cos t).  \label{d3}
\end{equation}%
If a curve $\alpha (s)$ is parametrized by the arc-length, then we take $%
f(s)=s.$ Then, the unit isotropic normal vector field $n(s,t)$ of $S$ is
defined by 
\begin{equation}
n(s,t)=\frac{S_{s}\times S_{t}}{\left\Vert S_{s}\times S_{t}\right\Vert },
\label{D3}
\end{equation}%
where $S_{s}$ and $S_{t}$ are the partial differentiations with respect to $s
$ and $t,$ respectively. Then, the isotropic normal vector is given by%
\begin{equation*}
n(s,t)=(0,\sin t,\cos t),
\end{equation*}%
it becomes in terms of the Frenet frame as follows: 
\begin{equation}
n(s,t)=-\sin tB+\cos tN.  \label{D4}
\end{equation}%
%
%
%
%
%
%
%
%
%
%

\begin{prop}
\textit{Let a curve $\alpha (s)$ be a general helix with the isotropic axis $%
d$. Then, for $t_{0}=(\frac{2k+1}{2})\pi $ $(k\in Z),$ the curve $\alpha (s)$%
} on\textit{\ surfaces of revolution given by \eqref{d3} of revolution is an
isophote curve with the axis $d$.}
\end{prop}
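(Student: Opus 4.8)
The plan is to compute the Galilean Darboux frame of the coordinate curve $t = t_0$ on the surface of revolution $S(s,t)$ given by \eqref{d3}, and then to verify directly that $\langle n(s,t_0), d\rangle$ is constant along this curve when $\alpha$ is a general helix and $t_0 = \frac{2k+1}{2}\pi$. First I would recall that a general helix is a curve whose tangent makes a constant angle with a fixed direction $d$; here $d$ is assumed isotropic, so the relevant notion of angle is the one from \eqref{B8}, i.e. $\langle T, d\rangle = \varphi$ is constant (equivalently, using Definition with formula \eqref{B8}, the projection of $d$ onto the principal normal/binormal plane pairs with $T$ in a fixed way). Since $d$ is constant and $\langle T, d\rangle$ is constant, differentiating and using the Frenet formulas \eqref{B3} gives $\kappa \langle N, d\rangle = 0$, hence $\langle N, d\rangle = 0$; differentiating once more yields $\langle B, d\rangle = $ constant as well, so along the helix $\alpha$ itself the vector $d$ is a fixed combination of $N$ and $B$ with the $N$-component vanishing: $d = \langle B, d\rangle\, B$ (up to the $T$-part, which does not affect $n$ since $n$ is isotropic and $T$ is non-isotropic, so $\langle T,\cdot\rangle$ uses the first-coordinate product which is zero for $n$).

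Next I would substitute the expression \eqref{D4} for the surface normal, $n(s,t) = -\sin t\, B + \cos t\, N$, and evaluate at $t = t_0 = \frac{2k+1}{2}\pi$, for which $\cos t_0 = 0$ and $\sin t_0 = \pm 1$. Then $n(s,t_0) = \mp B$, so that
\begin{equation*}
\langle n(s,t_0), d\rangle = \mp \langle B, d\rangle,
\end{equation*}
which is exactly the constant established in the previous step. By the definition of isophote curve, $\dfrac{\langle n(s,t_0), d\rangle}{\|n(s,t_0)\|}$ is then constant along the curve $s \mapsto S(s,t_0)$, and since $\|n\| = 1$ for the isotropic unit normal this constant equals $\cos\theta$ for the appropriate angle $\theta$; hence $\alpha$, viewed as the curve $t \equiv t_0$ on $S$, is an isophote curve with axis $d$.

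The main obstacle I anticipate is bookkeeping with the degenerate Galilean metric: one must be careful that the curve $s\mapsto S(s,t_0)$ really coincides (as a curve on $M$) with the original profile curve $\alpha$ up to a rigid motion of $G_3$ — this follows since at $t_0 = \frac{2k+1}{2}\pi$ the rotation \eqref{D1} sends $(f(s),0,g(s))$ to $(f(s), \pm g(s), 0)$, a Euclidean rotation by $\pm\pi/2$ of the profile, under which the Frenet and Darboux data are preserved — and that the surface normal $n$ along it, given by \eqref{D4}, indeed reduces to $\mp B$ there. The only other point requiring care is confirming that the helix condition for an isotropic axis is the right hypothesis to force $\langle B,d\rangle$ constant; this is immediate from \eqref{B3} once $\langle N,d\rangle=0$ is known, since $\frac{d}{ds}\langle B,d\rangle = -\tau\langle N,d\rangle = 0$. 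With these observations in place the computation is routine.
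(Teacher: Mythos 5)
Your proposal follows essentially the same route as the paper: substitute $t_{0}=\frac{2k+1}{2}\pi$ into \eqref{D4} to get $n(s,t_{0})=\mp B$, and then use that a general helix with isotropic axis $d$ has $\left\langle B,d\right\rangle$ constant, so $\left\langle n(s,t_{0}),d\right\rangle$ is constant. The only difference is that you also justify the constancy of $\left\langle B,d\right\rangle$ via $\kappa\left\langle N,d\right\rangle=0$ and $\frac{d}{ds}\left\langle B,d\right\rangle=-\tau\left\langle N,d\right\rangle=0$, a standard fact the paper simply invokes, so your argument is correct and matches the paper's proof.
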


\begin{proof}
 Substituting $t_{0}$ into \eqref{D4}, we get
\begin{equation*}
n(s,t_{0})=\mp B.
\end{equation*}%
If $\alpha(s)$ is a general helix with the axis $d$, then $\left\langle
B,d\right\rangle =$constant. Therefore, we get%
\begin{equation*}
\left\langle n(s,t_{0}),d\right\rangle =\mp \left\langle B,d\right\rangle =%
\text{constant.}
\end{equation*}
Thus $\alpha(s)$ is an  isophote curve with the axis $d$ on the surfaces of
revolution.
\end{proof}

\begin{prop}
\textit{Let a curve $\alpha (s)$ be a slant helix with the isotropic axis $d$%
. Then, for $t_{0}=k\pi $ $(k\in Z),$ the curve $\alpha (s)$ on surfaces of
revolution given by \eqref{d3} is an isophote curve with the axis $d$.}
\end{prop}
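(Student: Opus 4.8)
The plan is to mirror the argument of the preceding proposition (the general-helix case) but now exploit the value $t_0 = k\pi$ in \eqref{D4}. First I would substitute $t_0 = k\pi$ into the expression $n(s,t) = -\sin t\, B + \cos t\, N$; since $\sin(k\pi)=0$ and $\cos(k\pi)=\pm 1$, this collapses the isotropic normal to $n(s,t_0) = \pm N$, i.e.\ along the principal normal $N$ of $\alpha$. So the surface normal along the parameter curve $t=t_0$ is (up to sign) exactly the principal normal of the profile curve.

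Next I would recall the defining property of a slant helix: a curve is a slant helix with axis $d$ precisely when its principal normal line makes a constant angle with the fixed direction $d$, i.e.\ $\langle N, d\rangle = \text{constant}$. (Here $d$ is an isotropic vector, so the relevant notion of angle is the one from Definition via \eqref{B8}, and ``constant angle'' is equivalent to $\langle N,d\rangle$ being constant along $\alpha$ since $N$ is a unit non-isotropic vector.) Combining this with the previous step gives $\langle n(s,t_0), d\rangle = \pm \langle N, d\rangle = \text{constant}$, and since $n$ is a unit isotropic vector the isophote condition $\langle n(s,t_0),d\rangle / \|n(s,t_0)\| = \cos\theta$ holds with a constant right-hand side. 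Hence the curve $\alpha(s)$ sitting on $S$ as the coordinate curve $t=t_0$ is an isophote curve with axis $d$, which is exactly the claim.

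The argument is essentially a one-line substitution plus invoking the characterization of slant helices, so there is no serious analytic obstacle. The only point that needs care is the justification that for an isotropic axis $d$ the slant-helix condition is genuinely equivalent to $\langle N,d\rangle$ being constant: one should check that the angle between the unit non-isotropic vector $N$ and the isotropic vector $d$, as defined by \eqref{B8}, depends only on $\langle N, d\rangle$ (the denominator $\sqrt{y_2^2+y_3^2}$ is fixed once $d$ is fixed), so constancy of the angle and constancy of the inner product coincide. With that observation in hand the proof is immediate and parallels the proof of the previous proposition verbatim, with $B$ replaced by $N$ and $t_0 = (\tfrac{2k+1}{2})\pi$ replaced by $t_0 = k\pi$.
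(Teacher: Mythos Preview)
Your proof is correct and follows exactly the paper's own argument: substitute $t_0=k\pi$ into \eqref{D4} to reduce $n(s,t_0)$ to $\pm N$, invoke the slant-helix condition $\langle N,d\rangle=\text{constant}$, and conclude that $\langle n(s,t_0),d\rangle$ is constant. One small slip in your supplementary commentary: for an admissible curve $\alpha(s)=(s,f(s),g(s))$ the principal normal $N=\tfrac{1}{\kappa}\alpha''(s)$ has vanishing first component and is therefore an \emph{isotropic} vector, so the relevant angle notion between $N$ and the isotropic axis $d$ is \eqref{B9} rather than \eqref{B8}; this does not affect the argument, since in either case constancy of the angle is equivalent to constancy of $\langle N,d\rangle$.
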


\begin{proof}
 Substituting $t_{0}$ into \eqref{D4}, we get
\begin{equation*}
n(s,t_{0})=\mp N.
\end{equation*}%
If $\alpha(s)$ is a slant helix with the axis $d$, then $\left\langle
N,d\right\rangle =$constant. Therefore, we get%
\begin{equation*}
\left\langle n(s,t_{0}),d\right\rangle =\mp \left\langle N,d\right\rangle =%
\text{constant.}
\end{equation*}
Thus $\alpha(s)$ is an isophote curve with the axis $d$ on the surfaces of
revolution.
\end{proof}

If a curve $\alpha (s)=(f(s),0,g(s)),$ $\left( g(s)>0\right) $ is rotated by
isotropic rotations, then a surface of revolution is parametrized by%
\begin{equation}
S(s,t)=(f(s)+ct,st+c\frac{t^{2}}{2},g(s)).  \label{d4}
\end{equation}%
If a curve $\alpha (s)$ is parametrized by the arc-length, then we take $%
f(s)=s.$ Then, the isotropic surface normal is given by%
\begin{equation*}
n=\frac{1}{\sqrt{\left( g^{\prime }(s)c\right) ^{2}+s^{2}}}(0,g^{\prime
}(s)c,s),
\end{equation*}%
it becomes in terms of the Frenet frame as follows: 
\begin{equation}
n=\frac{1}{\sqrt{\left( g^{\prime }(s)c\right) ^{2}+s^{2}}}\left( -g^{\prime
}(s)cB+sN\right) .  \label{D5}
\end{equation}

\begin{prop}
\textit{Let an isotropic axis $d$ is given by (0,}$\mathit{d}_{y},d_{z}$%
\textit{). }

$i)$ \textit{If $\mathit{d}_{y}=0$ and $g(s)$ is a second order function, then}%
\textit{\ the curve }$\alpha (s)$\textit{ on surfaces of revolution given
by \eqref{d4} is an isophote curve.}

$ii)$ \textit{If $\mathit{d}_{z}=0$ and $g(s)$ is a second order function, then the
curve} $\alpha (s)$\textit{ on surfaces of revolution given by %
\eqref{d4} is an isophote curve.}
\end{prop}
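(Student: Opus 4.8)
The plan is to evaluate the isophote quantity $\langle n,d\rangle/\Vert n\Vert$ along $\alpha$ and show that, under either hypothesis, it does not depend on $s$. First I would observe that $\alpha(s)=(s,0,g(s))$ is precisely the section $t=0$ of the surface \eqref{d4}, so along $\alpha$ the isotropic surface normal is the one already computed in \eqref{D5}, namely $n(s)=\frac{1}{\sqrt{(g'(s)c)^2+s^2}}\,(0,\,g'(s)c,\,s)$. In particular $\Vert n(s)\Vert=1$, since the Galilean norm of an isotropic vector is the Euclidean norm of its $yz$-part, and (crucially) $n$ does not depend on $t$. Hence, for the isotropic axis $d=(0,d_y,d_z)$, the Galilean scalar product of the two isotropic vectors $n(s)$ and $d$ gives
\[
\langle n(s),d\rangle=\frac{g'(s)c\,d_y+s\,d_z}{\sqrt{(g'(s)c)^2+s^2}},
\]
and because $\Vert n\Vert=1$ the curve $\alpha$ is an isophote curve with axis $d$ exactly when this expression is constant in $s$.

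Next I would substitute the assumption that $g$ is a second order function. Writing $g(s)=a_2s^2+a_0$ (the linear term has to be absent — see the remark below) one has $g'(s)=2a_2s$ and $\sqrt{(g'(s)c)^2+s^2}=|s|\sqrt{4a_2^2c^2+1}$, which pulls the factor $|s|$ out of the radical. For part $i)$, where $d_y=0$, this yields $\langle n(s),d\rangle=\frac{s\,d_z}{|s|\sqrt{4a_2^2c^2+1}}=\pm\frac{d_z}{\sqrt{4a_2^2c^2+1}}$, a constant; for part $ii)$, where $d_z=0$, it yields $\langle n(s),d\rangle=\frac{2a_2c\,d_y\,s}{|s|\sqrt{4a_2^2c^2+1}}=\pm\frac{2a_2c\,d_y}{\sqrt{4a_2^2c^2+1}}$, again a constant. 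In both situations $\alpha$ is therefore an isophote curve with axis $d$. Since $\alpha$ is admissible with $f(s)=s$, the sign of $s$ is fixed on each component of the parameter interval, so the ambiguous $\pm$ causes no trouble.

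The only step that requires a little care is the interpretation of the phrase ``second order function'': the computation above needs $g'(s)$ to be proportional to $s$, i.e.\ $g(s)=a_2s^2+a_0$, so I would state this explicitly and note that a genuine quadratic carrying a nonzero linear coefficient does \emph{not} work, because that coefficient survives inside the radical and spoils constancy of the ratio. Everything else is a direct substitution of \eqref{D5} into the defining isophote relation $\langle n,d\rangle/\Vert n\Vert=\cos\beta$ together with elementary simplification, so I expect no genuine obstacle beyond this bookkeeping.
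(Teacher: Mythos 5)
Your proposal is correct and follows essentially the same route as the paper: both substitute the normal \eqref{D5} into $\langle n,d\rangle$ (you in coordinates, the paper after writing $d=\lambda_1 N$, resp.\ $d=-\lambda_2 B$) and use that $g'(s)$ is proportional to $s$ to pull $|s|$ out of the radical, the paper merely specializing to $g(s)=\frac{s^2}{2c}+A$ to get the constant $\frac{\lambda}{\sqrt{2}}$ while you keep a general leading coefficient. Your explicit caveat that ``second order function'' must be read as $g(s)=a_2s^2+a_0$ with no linear term (otherwise constancy fails) is a point the paper leaves implicit by silently choosing that particular $g$.
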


\begin{proof}

$i)$ If $\mathit{d}_{y}=0,$\textit{\ then we get }$d=\lambda_1N, ( \lambda_1\in R_0) $.

Using this above condition on \eqref{D5}, we get 
\begin{equation*}
\left\langle n,d\right\rangle =\frac{\lambda_1s}{\sqrt{\left( g^{\prime }(s)c\right)
^{2}+s^{2}}}.
\end{equation*}%
From the above equation, we can get $g(s)=\frac{s^{2}}{2c}+A,$ $A\in R.$
Thus we obtain $\left\langle n,d\right\rangle =\frac{\lambda_1}{\sqrt2}.$

$ii)$ If $\mathit{d}_{z}=0,$\textit{\ then we get }$d=-\lambda_2B, ( \lambda_2\in R_0).$
Using this above condition on \eqref{D5}, we get 
\begin{equation*}
\left\langle n,d\right\rangle =\frac{ \lambda_2g^{\prime }(s)c}{\sqrt{\left( g^{\prime
}(s)c\right) ^{2}+s^{2}}}.
\end{equation*}%
From the above equation, we can get $g(s)=\frac{s^{2}}{2c}+A,$ $A\in R.$
Thus we obtain $\left\langle n,d\right\rangle =\frac{\lambda_2}{\sqrt2}.$
\end{proof}

\begin{figure}[tbp]
\centering
\includegraphics[width=4cm,height=6cm]{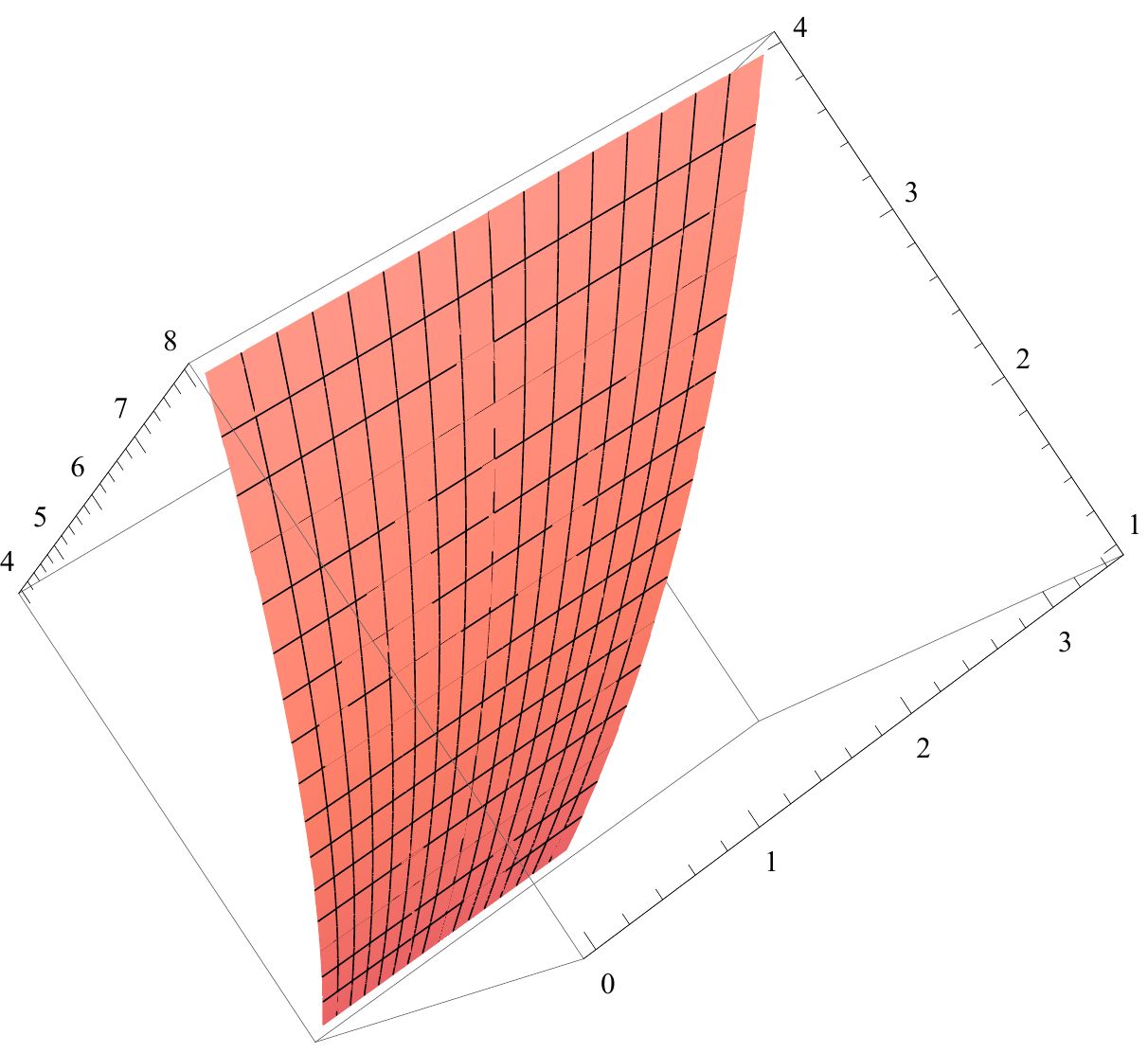} \label{fig:gull}
\caption{ Isotropic surface of revolution for $c=1$ and $A=0$. }
\label{fig:animals}
\end{figure}
Therefore, the rotating curve is an isotropic circle on surfaces of
revolution. We also show the surfaces \eqref{d4} for $g(s)=\frac{s^{2}}{2c}+A
$ in Figure 1.

\begin{coro}
The generating curve $\alpha (s)=(f(s),0,g(s))$ on surfaces of revolution
given by \textit{\eqref{d4}} becomes both a general helix and a slant helix
with the axis $d$.
\end{coro}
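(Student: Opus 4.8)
The plan is to invoke Proposition~4.3, whose proof has just been completed, and then match its conclusions against the definitions of general helix and slant helix on a surface of revolution. Recall that from the two cases of Proposition~4.3 we learned that if the isotropic axis $d=(0,d_y,d_z)$ has either $d_y=0$ or $d_z=0$, and $g(s)$ is a second-order function, then $g(s)=\tfrac{s^2}{2c}+A$, so that the generating curve $\alpha(s)=(s,0,\tfrac{s^2}{2c}+A)$ is an isotropic circle; moreover in both cases $\langle n,d\rangle$ is a nonzero constant ($\tfrac{\lambda_1}{\sqrt2}$ or $\tfrac{\lambda_2}{\sqrt2}$). The first step is therefore to record this explicit form of $\alpha(s)$ and to observe that in Case $i)$ we have $d=\lambda_1 N$ while in Case $ii)$ we have $d=-\lambda_2 B$ along $\alpha$.

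Next I would compute the Frenet apparatus of the isotropic circle $\alpha(s)=(s,0,\tfrac{s^2}{2c}+A)$ directly from the formulas in the Preliminaries: here $f(s)=0$ (the $y$-component) and $g(s)=\tfrac{s^2}{2c}+A$, so $f''(s)=0$, $g''(s)=\tfrac1c$, giving $\kappa(s)=\tfrac1c=\text{constant}$ and $\tau(s)=0$ since $\alpha'''$ has no component outside the span that makes the determinant vanish. Consequently $N(s)=(0,0,1)$ is a constant vector and $B(s)=T\times N$ is likewise readily seen to be constant. This is the computational heart of the argument, and it is short precisely because the curve is a planar isotropic circle.

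With $N$ and $B$ constant along $\alpha$, the final step is immediate: for Case $i)$ the axis $d=\lambda_1 N$ satisfies $\langle N,d\rangle=\lambda_1=\text{constant}$, so $\alpha$ is a slant helix with axis $d$ (its principal normal makes a constant angle with $d$ in the sense of Definition via \eqref{B8}), and for Case $ii)$ the axis $d=-\lambda_2 B$ satisfies $\langle B,d\rangle=-\lambda_2=\text{constant}$, so $\alpha$ is a general helix with axis $d$. Since the hypotheses of Proposition~4.3 (a second-order $g$, hence $g(s)=\tfrac{s^2}{2c}+A$) force both descriptions of $d$ to be available simultaneously up to the choice of which coordinate of $d$ vanishes, the generating curve $\alpha(s)$ is both a general helix and a slant helix, as claimed. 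The only point requiring care is to make sure the constancy of $N$ and $B$ is genuinely a consequence of $\tau=0$ together with $\kappa$ constant in the Galilean Frenet equations \eqref{B3}; this follows because $N'=\tau B=0$ and $B'=-\tau N=0$, so no obstacle remains.
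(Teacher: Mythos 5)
Your proposal is correct and coincides with the argument the paper leaves implicit: the corollary is stated there without proof, as an immediate consequence of Proposition 4.3 forcing $g(s)=\frac{s^{2}}{2c}+A$, so that the generating curve is an isotropic circle with $\kappa=\frac{1}{c}$, $\tau=0$, and hence, by \eqref{B3}, constant $N$ and $B$. One small tightening: since $N=(0,0,1)$ and $B=(0,-1,0)$ are both constant, $\langle N,d\rangle$ and $\langle B,d\rangle$ are constant for the fixed isotropic axis $d$ in either case of Proposition 4.3, which yields ``both a general helix and a slant helix'' at once and removes the need for your slightly vague closing case discussion (also note the relevant angle here is the isotropic--isotropic one \eqref{B9}, not \eqref{B8}).
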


\end{document}